\renewcommand*\subjclass[2][2010]{%
  \def\@subjclass{#2}%
  \@ifundefined{subjclassname@#1}{%
    \ClassWarning{\@classname}{Unknown edition (#1) of Mathematics
      Subject Classification; using '2010'.}%
  }{%
    \@xp\let\@xp\subjclassname\csname subjclassname@#1\endcsname
  }%
}
\newtheorem{theorem}{Theorem}[section]
\newtheorem{lemma}[theorem]{Lemma}
\newtheorem{corollary}[theorem]{Corollary}
\theoremstyle{definition}
\renewcommand*\subjclass[2][2010]{%
  \def\@subjclass{#2}%
  \@ifundefined{subjclassname@#1}{%
    \ClassWarning{\@classname}{Unknown edition (#1) of Mathematics
      Subject Classification; using '1991'.}%
  }{%
    \@xp\let\@xp\subjclassname\csname subjclassname@#1\endcsname
  }%
}
\begin{document}

\title{An Extension of a Congruence by Tauraso}

\author{Romeo Me\v strovi\' c}
\address{Department of Mathematics,
Maritime Faculty, University of Montenegro, 
Dobrota 36, 85330 Kotor, Montenegro 
romeo@ac.me}

{\renewcommand{\thefootnote}{}\footnote{2010 {\it Mathematics Subject 
Classification.} Primary 11B75, 11A07;  Secondary 11B65, 05A19,  05A10.

{\it Keywords and phrases.}  congruence modulo prime (prime power), 
$n$th harmonic number of order $m$, Bernoulli number}
\setcounter{footnote}{0}}

\begin{abstract}
 For a positive integer $n$ let $H_n=\sum_{k=1}^{n}1/n$
be the $n$th  harmonic number.
In this note we prove that for any prime $p\ge 7$,
   $$
\sum_{k=1}^{p-1}\frac{H_k}{k^2}\equiv \sum_{k=1}^{p-1}\frac{H_k^2}{k}
\equiv\frac{3}{2p}\sum_{k=1}^{p-1}\frac{1}{k^2}\pmod{p^2}.
   $$
Notice that the first part of this congruence is recently proposed 
by R. Tauraso as a problem in Amer. Math.  Monthly. 
In our elementary proof of the second part of the above congruence
we use certain classical congruences modulo a prime and the square of a prime, 
some congruences involving harmonic numbers 
and a combinatorial identity due to V. Hern\'{a}ndez. 
   \end{abstract}
  \maketitle

\section{Introduction and Main Results}

Given  positive integers $n$ and $m$, the 
{\it harmonic numbers of order} $m$ are those rational numbers
$H_{n,m}$ defined as 
  $$
H_{n,m}=\sum_{k=1}^{n}\frac{1}{k^m}.
  $$
For simplicity, we will denote by
 $$
H_n:=H_{n,1}=\sum_{k=1}^n\frac{1}{k}
  $$   
the $n$th  {\it harmonic number} (we assume in addition that  
$H_{0,m}=H_0=0$).

Usually, here as always in the sequel, we consider the congruence relation 
modulo a prime $p$ extended to the ring of rational numbers
with denominators not divisible by $p$. 
For such fractions we put $m/n\equiv r/s \,(\bmod{\,p})$ 
if and only if $ms\equiv nr\,(\bmod{\,p})$, and the residue
class of $m/n$ is the residue class of $mn'$ where 
$n'$ is the inverse of $n$ modulo $p$.  

By  a problem proposed by R. Tauraso in \cite{t1}
and recently solved by D. B. Tyler \cite{ty}, for any prime $p\ge 7$,
  \begin{equation}\label{con1}
\sum_{k=1}^{p-1}\frac{H_k}{k^2}\equiv \sum_{k=1}^{p-1}\frac{H_k^2}{k}
\pmod{p^2}.
   \end{equation}
Further, R. Tauraso \cite[Theorem 2.3]{t3} proved
 \begin{equation}\label{con2}
\sum_{k=1}^{p-1}\frac{H_k}{k^2}\equiv 
-\frac{3}{p^2}\sum_{k=1}^{p-1}\frac{1}{k}\pmod{p^2}.
   \end{equation}
Tauraso's proof of (\ref{con2}) is based on
an identity due to V. Hern\'{a}ndez \cite{h} (see Lemma~\ref{l2.5}) 
and the congruence for triple harmonic sum modulo a prime 
due to Zhao \cite{z} (see (\ref{con49}) of Remarks in Section 2). 
In this note, 
we give an elementary proof of  (\ref{con2}) and
its extension as follows.

\begin{theorem}\label{t1.1} If  $p\ge 7$ is a prime, then
   \begin{equation}\label{con3}
\sum_{k=1}^{p-1}\frac{H_k}{k^2}\equiv \sum_{k=1}^{p-1}\frac{H_k^2}{k}
\equiv -\frac{3}{p^2}\sum_{k=1}^{p-1}\frac{1}{k}
\equiv\frac{3}{2p}\sum_{k=1}^{p-1}\frac{1}{k^2}\pmod{p^2}.
   \end{equation}
 \end{theorem}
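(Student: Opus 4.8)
The statement of Theorem~\ref{t1.1} is a chain of three congruences modulo $p^2$. Its first link, $\sum_{k=1}^{p-1}H_k/k^2\equiv\sum_{k=1}^{p-1}H_k^2/k\pmod{p^2}$, is precisely (\ref{con1}) and will be quoted without proof. It therefore remains to establish two congruences: first the ``bridge''
\[
-\frac{3}{p^2}\sum_{k=1}^{p-1}\frac1k\equiv\frac{3}{2p}\sum_{k=1}^{p-1}\frac1{k^2}\pmod{p^2},
\]
which I would dispose of at once, and then the central congruence (\ref{con2}), i.e.\ $\sum_{k=1}^{p-1}H_k/k^2\equiv-\frac{3}{p^2}\sum_{k=1}^{p-1}1/k\pmod{p^2}$; together with (\ref{con1}) these give the whole chain (\ref{con3}).

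For the bridge I would pair $k$ with $p-k$. Since $\frac1k+\frac1{p-k}=\frac{p}{k(p-k)}$, we have the exact identity $H_{p-1}=\frac p2\sum_{k=1}^{p-1}\frac1{k(p-k)}$, and expanding $\frac1{k(p-k)}=-\frac1{k^2}(1-p/k)^{-1}$ as a $p$-adic geometric series gives $H_{p-1}=-\frac p2H_{p-1,2}-\frac{p^2}{2}H_{p-1,3}-\frac{p^3}{2}H_{p-1,4}-\cdots$. By the classical congruences $H_{p-1,3}\equiv0\pmod{p^2}$ (harmonic sum of odd order) and $H_{p-1,4}\equiv0\pmod p$ (even order, using $p\ge7$), every term past the first is $\equiv0\pmod{p^4}$, so $H_{p-1}\equiv-\frac p2H_{p-1,2}\pmod{p^4}$; multiplying by $-3/p^2$ yields the bridge. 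The same computation also records the refined Wolstenholme congruence $H_{p-1}\equiv-\frac p2H_{p-1,2}\pmod{p^3}$, which I expect to use below.

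The heart of the matter is (\ref{con2}). A naive attack --- substituting $j\mapsto p-j$, $k\mapsto p-k$ in $\sum_{k=1}^{p-1}H_k/k^2=\sum_{1\le j\le k\le p-1}1/(jk^2)$ --- does not succeed: it sends this double harmonic sum essentially to its reflection and produces only the symmetric identity $\sum_{j<k}1/(jk^2)+\sum_{j<k}1/(j^2k)=H_{p-1}H_{p-1,2}-H_{p-1,3}$ and consequences of it, never isolating the sum we want. The way in is the combinatorial identity of Hern\'andez (Lemma~\ref{l2.5}): applied to $\sum_{k=1}^{p-1}H_k/k^2$, or (via the first link) to $\sum_{k=1}^{p-1}H_k^2/k$, it rewrites the quantity as a sum of terms each of which is either a product of $H_{p-1}$, $H_{p-1,2}$, $H_{p-1,3}$ or an individually tractable harmonic sum. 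I would then evaluate each ingredient modulo $p^2$ using the classical congruences: Wolstenholme's $H_{p-1}\equiv0\pmod{p^2}$ (which annihilates every term carrying a factor $H_{p-1}$), $H_{p-1,2}\equiv0\pmod p$, $H_{p-1,3}\equiv0\pmod{p^2}$, together with standard congruences for weighted harmonic sums such as $\sum_{k=1}^{p-1}H_k/k$ and $\sum_{k=1}^{p-1}H_{k,2}/k$. Any sub-sum not yet of this type I would reduce, once more by the pairing $k\mapsto p-k$ as in the bridge step, to $H_{p-1,3}$ and $H_{p-1}H_{p-1,2}$ modulo $p^3$, and finally convert into $H_{p-1}/p^2$ using the refined Wolstenholme congruence. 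This is exactly the step where Tauraso's original proof instead invokes Zhao's triple-harmonic-sum congruence (cf.\ (\ref{con49}) and \cite{z}); replacing that input by the elementary pairing is what makes the present proof self-contained.

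The step I expect to be the real obstacle is this last reduction, and the difficulty is essentially bookkeeping: tracking the rational coefficients carefully enough that the surviving term comes out as $-3\,H_{p-1}/p^2$ exactly, not merely up to a $p$-adic unit, and checking that no term forcing $H_{p-1,3}$ to precision higher than $p^2$ --- which would bring in the unrelated Bernoulli number $B_{p-5}$ --- actually survives to that precision. Granting this, the rest is a routine assembly of harmonic-number identities and the cited classical congruences.
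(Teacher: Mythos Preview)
Your outline matches the paper's scaffolding (Hern\'andez for the central congruence, the pairing $k\leftrightarrow p-k$ for the bridge, classical Wolstenholme-type input throughout), but there is a genuine gap at exactly the step you flag as ``bookkeeping''.

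After applying Hern\'andez with $m=2$ and expanding $(-1)^{k-1}\binom{p-1}{k}\equiv -1+pH_k-\tfrac{p^2}{2}(H_k^2-H_{k,2})\pmod{p^3}$, the terms you must control modulo $p$ are $\sum_{k=1}^{p-1}H_k^3/k$ and $\sum_{k=1}^{p-1}H_kH_{k,2}/k$. The pairing $k\mapsto p-k$ does \emph{not} reduce these to products of $H_{p-1},H_{p-1,2},H_{p-1,3}$ as you claim; it only yields
\[
\sum_{k=1}^{p-1}\frac{H_k^3}{k}\equiv\frac32\sum_{k=1}^{p-1}\frac{H_k^2}{k^2}\pmod p,
\qquad
\sum_{k=1}^{p-1}\frac{H_kH_{k,2}}{k}\equiv\sum_{k=1}^{p-1}\frac{H_{k-1}H_{k-1,2}}{k}\pmod p,
\]
so everything hangs on proving $\sum_{k=1}^{p-1}H_k^2/k^2\equiv 0\pmod p$. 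That congruence is precisely equivalent (via one more pairing) to Zhao's triple-harmonic-sum vanishing $\sum_{i<j<k}1/(ij^2k)\equiv 0\pmod p$, which is the very input you are trying to avoid. The paper closes this circle by a \emph{second} application of Hern\'andez, this time with $m=3$ (Lemma~2.6), which produces the independent relation $\sum H_k^3/k+\sum H_kH_{k,2}/k\equiv 0\pmod p$; combining it with the pairing relation and a direct computation of $\sum_{i<j<k}1/(ij^2k)$ in terms of $\sum H_k^2/k^2$ (Lemmas~2.7--2.8) finally forces $\sum H_k^2/k^2\equiv 0\pmod p$. This is a real idea, not coefficient-tracking, and your sketch does not supply it.
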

Recall that Z. W. Sun in \cite{s} established  basic congruences modulo a prime
$p\ge 5$ for several sums of terms involving harmonic numbers.
In particular,  Sun established
$\sum_{k=1}^{p-1}H_k^r\,(\bmod{\, p^{4-r}})$ for $r=1,2,3$. 
Further generalizations of these congruences are recently obtained by 
Tauraso in \cite{t2}.

Recall that the 
{\it Bernoulli numbers} $B_k$ are defined by the generating function
   $$
\sum_{k=0}^{\infty}B_k\frac{x^k}{k!}=\frac{x}{e^x-1}.
  $$
It is easy to find the values $B_0=1$, $B_1=-\frac{1}{2}$, 
$B_2=\frac{1}{6}$, $B_4=-\frac{1}{30}$, and $B_n=0$ for odd $n\ge 3$. 
Furthermore, $(-1)^{n-1}B_{2n}>0$ for all $n\ge 1$. 
Applying a congruence given in \cite[Theorem 5.1(a)]{s1}
related to the sum $\sum_{k=1}^{p-1}1/k^2$ modulo $p^3$,  the congruence 
(\ref{con3}) in terms of Bernoulli numbers may be written
as follows.

  \begin{corollary}\label{c1.2} Let $p\ge 7$ be a prime. Then
    \begin{equation}\label{con4}
 \sum_{k=1}^{p-1}\frac{H_k^2}{k}
\equiv \sum_{k=1}^{p-1}\frac{H_k}{k^2}
\equiv 3\left(\frac{B_{2p-4}}{2p-4}-\frac{2B_{p-3}}{p-3}\right)
\pmod{p^2}. 
     \end{equation}
In particular, we have
   \begin{equation}\label{con5}
 \sum_{k=1}^{p-1}\frac{H_k^2}{k}
\equiv \sum_{k=1}^{p-1}\frac{H_k}{k^2}
\equiv B_{p-3}\pmod{p}. 
   \end{equation}
\end{corollary}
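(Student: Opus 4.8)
The plan is to deduce Corollary~\ref{c1.2} from Theorem~\ref{t1.1} together with a known expansion of $\sum_{k=1}^{p-1}1/k^2$ modulo $p^3$, so that no new combinatorial input is required. By Theorem~\ref{t1.1} both $\sum_{k=1}^{p-1}H_k^2/k$ and $\sum_{k=1}^{p-1}H_k/k^2$ are already congruent modulo $p^2$ to $\frac{3}{2p}\sum_{k=1}^{p-1}1/k^2$; hence the whole corollary reduces to evaluating this last quantity modulo $p^2$ in terms of Bernoulli numbers. For that I would invoke the congruence of Z.~W. Sun \cite[Theorem 5.1(a)]{s1}, which in the shape needed here states
\begin{equation*}
\sum_{k=1}^{p-1}\frac{1}{k^2}\equiv 2p\left(\frac{B_{2p-4}}{2p-4}-\frac{2B_{p-3}}{p-3}\right)\pmod{p^3}.
\end{equation*}

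Next I would divide this congruence by $2p/3$. By Wolstenholme's theorem $p\mid\sum_{k=1}^{p-1}1/k^2$, and (via the von Staudt--Clausen theorem) the two quotients $B_{2p-4}/(2p-4)$ and $B_{p-3}/(p-3)$ are $p$-integral for $p\ge 7$; hence the right-hand side above is a multiple of $p$, the division is legitimate, and the modulus correctly drops from $p^3$ to $p^2$. The result is precisely
\begin{equation*}
\frac{3}{2p}\sum_{k=1}^{p-1}\frac{1}{k^2}\equiv 3\left(\frac{B_{2p-4}}{2p-4}-\frac{2B_{p-3}}{p-3}\right)\pmod{p^2},
\end{equation*}
which, combined with Theorem~\ref{t1.1}, is exactly (\ref{con4}).

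Finally, for the particular case (\ref{con5}) I would reduce (\ref{con4}) modulo $p$ and bring in the Kummer congruences. Since $2p-4=(p-1)+(p-3)$ we have $2p-4\equiv p-3\pmod{p-1}$, and for $p\ge 7$ the positive even integer $p-3$ is not divisible by $p-1$; Kummer's congruence therefore yields $\tfrac{B_{2p-4}}{2p-4}\equiv\tfrac{B_{p-3}}{p-3}\pmod p$. Substituting this, the right-hand side of (\ref{con4}) becomes, modulo $p$, $3\bigl(\tfrac{B_{p-3}}{p-3}-\tfrac{2B_{p-3}}{p-3}\bigr)=-\tfrac{3B_{p-3}}{p-3}$, and since $p-3\equiv-3\pmod p$ this is congruent to $B_{p-3}$, which is (\ref{con5}). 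The argument is essentially bookkeeping; the only points needing genuine care are the justification of dividing the mod $p^3$ congruence by $2p/3$ — which rests on the Wolstenholme divisibility together with the $p$-integrality of the Bernoulli quotients — and keeping track of the modulus $p-1$ when applying Kummer's congruence to the pair $(2p-4,\,p-3)$.
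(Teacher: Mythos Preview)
Your argument is correct and is exactly the derivation the paper sketches: apply the mod $p^3$ evaluation of $\sum_{k=1}^{p-1}1/k^2$ from \cite[Theorem~5.1(a)]{s1} to the last expression in Theorem~\ref{t1.1}, then reduce via Kummer's congruence for~(\ref{con5}). One small slip: the reference \cite{s1} is Z.~H.~Sun, not Z.~W.~Sun.
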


\noindent{\bf Remark.} Notice that 
the second congruence of (\ref{con5}) was obtained by Sun and Tauraso
\cite[the congruence (5.4)]{st} by using a standard technique expressing 
sum of powers  in terms of  Bernoulli numbers.  

Our proof of the second part of the congruence (\ref{con3}) given 
in the next section is entirely elementary.
It is based on  certain classical congruences modulo a prime 
and the square of a prime, two simple  congruences given by Z. W. Sun \cite{s} 
and two particular cases of a  combinatorial identity 
due to V. Hern\'{a}ndez \cite{h}.

\section{Proof of Theorem~\ref{t1.1}}

The following congruences by  Z. W. Sun given in his recent paper \cite{s}
are needid in the proof of Theorem~\ref{t1.1}.

\vspace{2mm}
\begin{lemma}\label{l2.1} 
  Let $p\ge 7$ be a prime. Then
    \begin{equation}\label{con8}
H_{p-k}\equiv H_{k-1}\pmod{p}
              \end{equation}
and
     \begin{equation}\label{con9}
(-1)^k{p-1\choose k}\equiv 1-pH_k+\frac{p^2}{2}(H_k^2-H_{k,2})\pmod{p^3}
              \end{equation}
for every $k=1,2,\ldots,p-1$.
\end{lemma}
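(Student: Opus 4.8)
The plan is to derive both parts of Lemma~\ref{l2.1} by direct elementary computation; neither requires any tool beyond the observation that, for $1\le k\le p-1$, the denominators of $H_k$ and $H_{k,2}$ are prime to $p$, so that all the congruences below are well defined in the localization $\mathbb{Z}_{(p)}$.

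For (\ref{con8}) I would begin from the splitting $H_{p-1}=H_{p-k}+\sum_{j=p-k+1}^{p-1}\frac1j$. Since $x\mapsto x^{-1}$ permutes the nonzero residues modulo $p$, one has $H_{p-1}\equiv\sum_{j=1}^{p-1}j=\frac{p(p-1)}{2}\equiv 0\pmod p$ for odd $p$. In the tail I substitute $j=p-i$ with $i$ ranging over $1,\dots,k-1$; then $\frac1{p-i}\equiv-\frac1i\pmod p$, whence $\sum_{j=p-k+1}^{p-1}\frac1j\equiv-H_{k-1}\pmod p$. Subtracting from $H_{p-1}\equiv 0$ gives $H_{p-k}\equiv H_{k-1}\pmod p$, as claimed.

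For (\ref{con9}) I would write the binomial coefficient as a product,
$$(-1)^k\binom{p-1}{k}=(-1)^k\prod_{j=1}^{k}\frac{p-j}{j}=\prod_{j=1}^{k}\Bigl(1-\frac{p}{j}\Bigr),$$
a fraction whose denominator $k!$ is coprime to $p$. Expanding the product over $j$ and discarding every term that is a multiple of $p^3$ leaves
$$\prod_{j=1}^{k}\Bigl(1-\frac{p}{j}\Bigr)\equiv 1-p\sum_{j=1}^{k}\frac1j+p^2\sum_{1\le i<j\le k}\frac1{ij}\pmod{p^3}.$$
Here $\sum_{j=1}^{k}1/j=H_k$, and the Newton-type identity obtained by squaring $H_k$ gives $\sum_{1\le i<j\le k}\frac1{ij}=\tfrac12\bigl(H_k^2-H_{k,2}\bigr)$. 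Substituting these two facts yields precisely $1-pH_k+\frac{p^2}{2}(H_k^2-H_{k,2})$ modulo $p^3$.

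There is no substantial obstacle in this argument; the only points that merit a word of care are verifying that truncating the product expansion after the $p^2$-term is legitimate modulo $p^3$ (each omitted term carries a factor $p^3$ and a unit denominator), and keeping track of the reductions modulo $p$, $p^2$, and $p^3$ consistently. If one preferred, (\ref{con9}) could instead be established by induction on $k$, but the product expansion is the most transparent route.
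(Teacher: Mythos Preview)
Your argument is correct in every detail: the reindexing $j\mapsto p-i$ together with $H_{p-1}\equiv 0\pmod p$ gives (\ref{con8}), and the product expansion of $(-1)^k\binom{p-1}{k}=\prod_{j=1}^k(1-p/j)$ truncated at order $p^2$ gives (\ref{con9}) once the elementary symmetric function $\sum_{i<j}1/(ij)$ is rewritten as $\tfrac12(H_k^2-H_{k,2})$. The paper itself does not argue this lemma at all; it simply records that (\ref{con8}) and (\ref{con9}) are the congruences (2.1) and (2.2) of \cite[Lemma~2.1]{s} and moves on. What you have supplied is precisely the standard elementary derivation one finds behind that citation, so your write-up is more self-contained than the paper's treatment while following the same underlying idea.
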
 
\begin{proof} The congruences (\ref{con8}) and (\ref{con9}) are in fact the 
congruences (2.1) and (2.2) in \cite[Lemma 2.1]{s}, respectively. 
  \end{proof}

The following well known result is a generalization of
{\it Wolstenholme's theorem} (see, e.g., \cite[Theorem 1]{a}
or \cite{gr}). 

\vspace{2mm}
\begin{lemma}\label{l2.2}
 {\rm  (\cite[Theorem 3]{b}).}  Let $m$ be a positive 
integer and let  $p$ be a prime such that $p\ge m+3$. Then
         \begin{equation}\label{con10}\begin{split}    
H_{p-1,m}\equiv\left\{
    \begin{array}{ll}
0 & \pmod{p}\quad {if\,\, m\,\, is\,\, even}\\
0 & \pmod{p^2}\quad {if\,\, m\,\, is\,\, odd}.
  \end{array}\right.
         \end{split}\end{equation}
In particular, for any prime $p\ge 5$, 
  \begin{equation}\label{con11}
H_{p-1}\equiv 0\,(\bmod{\,p^2})\quad (Wolstenholme's\,\, theorem),
   \end{equation} 
and for any prime $p\ge 7$,
$H_{p-1,3}\equiv 0\,(\bmod{\,p^2})$  
and $H_{p-1,2}\equiv H_{p-1,4}\equiv 0\,(\bmod{\,p})$.
  \end{lemma}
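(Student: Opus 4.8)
The plan is to derive everything from two standard facts: the vanishing modulo $p$ of the power sum $\sum_{k=1}^{p-1}k^{n}$ whenever $(p-1)\nmid n$, and the symmetry of the index set under $k\mapsto p-k$. First I would record the auxiliary statement that $H_{p-1,n}\equiv 0\pmod p$ for every exponent $n$ with $(p-1)\nmid n$ — in particular for every $n$ in the range $1\le n\le p-2$. Fixing a primitive root $g$ modulo $p$, the residues $1,2,\dots,p-1$ are a permutation of $g^{0},g^{1},\dots,g^{p-2}$, so
\[
H_{p-1,n}=\sum_{k=1}^{p-1}k^{-n}\equiv\sum_{i=0}^{p-2}g^{-in}=\frac{g^{-(p-1)n}-1}{g^{-n}-1}\pmod p ,
\]
where the geometric summation is legitimate because $g^{-n}\not\equiv 1\pmod p$ when $(p-1)\nmid n$; by Fermat's little theorem the numerator is $\equiv 0$ while the denominator is a unit, so $H_{p-1,n}\equiv 0\pmod p$. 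Since $p\ge m+3$ forces $m\le p-2$, this already gives the even case of (\ref{con10}).

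To upgrade the odd case to a congruence modulo $p^{2}$, I would pair the $k$-th and $(p-k)$-th terms. Writing $p-k=-k(1-p/k)$ and using $(1-p/k)^{-m}\equiv 1+mp/k\pmod{p^{2}}$, one gets for odd $m$
\[
\frac{1}{k^{m}}+\frac{1}{(p-k)^{m}}\equiv\frac{1}{k^{m}}-\frac{1}{k^{m}}\left(1+\frac{mp}{k}\right)=-\frac{mp}{k^{m+1}}\pmod{p^{2}},
\]
and summing over $k=1,\dots,p-1$ (the second sum being just $H_{p-1,m}$ again) yields
\[
2H_{p-1,m}\equiv -mp\,H_{p-1,m+1}\pmod{p^{2}} .
\]
Since $m+1$ is even and $m+1\le p-2$ by the hypothesis $p\ge m+3$, the first step gives $H_{p-1,m+1}\equiv 0\pmod p$, hence $mp\,H_{p-1,m+1}\equiv 0\pmod{p^{2}}$, and as $p$ is odd we conclude $H_{p-1,m}\equiv 0\pmod{p^{2}}$. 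Taking $m=1,2,3,4$ then recovers Wolstenholme's theorem (\ref{con11}) and the remaining special cases stated after it.

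Because the lemma is a classical one, I do not expect a genuine obstacle: every step is a short computation. The only points that need attention are carrying the expansion of $(1-p/k)^{-m}$ to precision $p^{2}$ correctly, and noticing that the hypothesis $p\ge m+3$ is exactly what makes $m+1\le p-2$, so that the modulo-$p$ vanishing of $H_{p-1,m+1}$ is available precisely where the argument needs it. A fully equivalent alternative is to expand $\sum_{k=1}^{p-1}k^{n}$ by the Bernoulli-number (Faulhaber) formula and invoke the von Staudt--Clausen theorem to read off the $p$-adic valuation, but the primitive-root route above is shorter and sidesteps any case distinction on $n$ modulo $p-1$.
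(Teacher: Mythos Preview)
Your argument is correct. Both steps --- the primitive-root computation showing $H_{p-1,n}\equiv 0\pmod p$ whenever $(p-1)\nmid n$, and the pairing $k\leftrightarrow p-k$ together with the expansion $(1-p/k)^{-m}\equiv 1+mp/k\pmod{p^2}$ to lift the odd case to modulus $p^2$ --- are carried out cleanly, and you correctly identify that the hypothesis $p\ge m+3$ is precisely what guarantees $m+1\le p-2$, so that the first step applies to $H_{p-1,m+1}$.

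As for comparison with the paper: the paper does not prove this lemma at all. It is quoted as a known result from Bayat \cite[Theorem~3]{b} and stated without argument, serving purely as a toolbox fact for the later lemmas. Your write-up therefore goes beyond what the paper supplies; the proof you give is the standard one (essentially the same as in Bayat's note and in most textbook treatments), so there is no discrepancy of method to discuss, only the observation that you have filled in what the paper chose to cite.
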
 

\begin{lemma}\label{l2.3}
  Let $p\ge 7$ be a prime. Then
 \begin{equation}\label{con12}
\sum_{k=1}^{n}\frac{H_{k-1}}{k^3}\equiv \sum_{k=1}^{n}\frac{H_k}{k^3}
\equiv 0\pmod{p},
  \end{equation}
\begin{equation}\label{con13}
\sum_{k=1}^{p-1}\frac{H_k^3}{k}\equiv\frac{3}{2}
\sum_{k=1}^{p-1}\frac{H_k^2}{k^2}
\pmod{p},
  \end{equation}
  \begin{equation}\label{con14}
\sum_{k=1}^{n}\frac{H_{k-1,3}}{k}\equiv \sum_{k=1}^{n}\frac{H_{k,3}}{k}
\equiv 0\pmod{p},
  \end{equation}
and 
 \begin{equation}\label{con15}
\sum_{k=1}^{p-1}\frac{H_k}{k^2}\equiv \sum_{k=1}^{p-1}\frac{H_k^2}{k}
\pmod{p^2}.
\end{equation}
 \end{lemma}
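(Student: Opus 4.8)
The plan is to establish the four congruences in turn: the first three by the reflection $k\mapsto p-k$ combined with Lemmas~\ref{l2.1} and~\ref{l2.2}, and the last by quoting an already-cited result. Here I read the upper limit in \eqref{con12} and \eqref{con14} as $p-1$, in agreement with \eqref{con13} and \eqref{con15}. Two elementary facts will be used throughout: modulo $p$ one has $(p-k)^{-m}\equiv(-1)^m k^{-m}$ for $1\le k\le p-1$ and every $m\ge1$; and, by Lemma~\ref{l2.2}, $H_{p-1,3}\equiv0\pmod p$ and $H_{p-1,4}\equiv0\pmod p$. Combining the former with the identity $H_{p-1,3}=H_{p-k,3}+\sum_{m=1}^{k-1}(p-m)^{-3}$ gives the order-$3$ analogue of \eqref{con8}, namely $H_{p-k,3}\equiv H_{k-1,3}\pmod p$.

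For \eqref{con12}, since $\sum_{k=1}^{p-1}H_k/k^3-\sum_{k=1}^{p-1}H_{k-1}/k^3=H_{p-1,4}\equiv0\pmod p$, it is enough to treat $S:=\sum_{k=1}^{p-1}H_k/k^3$. Substituting $k\mapsto p-k$ and reducing modulo $p$ by means of \eqref{con8} turns $S$ into $-\sum_{k=1}^{p-1}H_{k-1}/k^3\equiv-S$, so $2S\equiv0$ and hence $S\equiv0\pmod p$ because $p$ is odd. The proof of \eqref{con14} is the same: $\sum_{k=1}^{p-1}H_{k,3}/k-\sum_{k=1}^{p-1}H_{k-1,3}/k=H_{p-1,4}\equiv0$, and applying $k\mapsto p-k$ to $\sum_{k=1}^{p-1}H_{k,3}/k$ together with $H_{p-k,3}\equiv H_{k-1,3}\pmod p$ shows that this sum equals its own negative modulo $p$.

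For \eqref{con13}, the same reflection yields $\sum_{k=1}^{p-1}H_k^3/k\equiv-\sum_{k=1}^{p-1}H_{k-1}^3/k\pmod p$, hence $\sum_{k=1}^{p-1}(H_k^3+H_{k-1}^3)/k\equiv0\pmod p$. Expanding $H_{k-1}=H_k-1/k$ gives $H_k^3+H_{k-1}^3=2H_k^3-3H_k^2/k+3H_k/k^2-1/k^3$; summing and invoking $\sum_{k=1}^{p-1}H_k/k^3\equiv0$ (just established) and $\sum_{k=1}^{p-1}1/k^4=H_{p-1,4}\equiv0$ leaves $2\sum_{k=1}^{p-1}H_k^3/k\equiv3\sum_{k=1}^{p-1}H_k^2/k^2\pmod p$, which is \eqref{con13}. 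Finally, \eqref{con15} coincides verbatim with the congruence \eqref{con1} proposed by Tauraso and solved by Tyler, so there is nothing further to prove.

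The only point requiring care is the repeated reflection $k\mapsto p-k$: one must track precisely whether $H_k$ or $H_{k-1}$ (respectively $H_{k,3}$ or $H_{k-1,3}$) results after the substitution, and the passage from $2S\equiv0$ to $S\equiv0$ uses that $p$ is odd. Apart from that and the external input \eqref{con1}, everything reduces to routine manipulation with the congruences supplied by Lemmas~\ref{l2.1} and~\ref{l2.2}.
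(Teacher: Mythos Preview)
Your arguments for \eqref{con12}, \eqref{con13}, and \eqref{con14} are correct and coincide with the paper's: the same reflection $k\mapsto p-k$ via \eqref{con8} (and its order-$3$ analogue), combined with $H_{p-1,4}\equiv 0\pmod p$, and the same cubic expansion of $H_{k-1}^3$ in terms of $H_k$. The only difference is organizational: the paper first computes $\sum_k (H_k^3-H_{k-1}^3)/k$ and then applies the reflection, while you first apply the reflection to obtain $\sum_k(H_k^3+H_{k-1}^3)/k\equiv 0$ and then expand; the two are equivalent.

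For \eqref{con15} the approaches diverge. You invoke the external result \eqref{con1} (Tauraso's problem, solved by Tyler). The paper instead gives a two-line self-contained proof: telescoping the identity $H_{k-1}^3=H_k^3-3H_k^2/k+3H_k/k^2-1/k^3$ over $k=1,\dots,p-1$ yields
\[
\sum_{k=1}^{p-1}\frac{H_k^2}{k}-\sum_{k=1}^{p-1}\frac{H_k}{k^2}=\frac{1}{3}\bigl(H_{p-1}^3-H_{p-1,3}\bigr),
\]
and both terms on the right are $\equiv 0\pmod{p^2}$ by Lemma~\ref{l2.2}. Your citation is logically fine, but the paper's direct argument keeps the lemma independent of the outside reference and is no longer than the sentence you spent citing it.
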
 

\begin{proof} 
By the congruence (\ref{con8}) from Lemma~\ref{l2.1},  
$H_k\equiv H_{p-k-1}\,(\bmod{\,p})$ for each $k=1,2,\ldots,p-1$
(notice that this is true for $k=p-1$ because $p\mid H_{p-1}$),
and therefore
     \begin{equation}\label{con16}
\sum_{k=1}^{p-1}\frac{H_{k-1}}{k^3}
=\sum_{k=1}^{p-1}\frac{H_{(p-k)-1}}{(p-k)^3}
\equiv \sum_{k=1}^{p-1}\frac{H_k}{(p-k)^3}\equiv 
- \sum_{k=1}^{p-1}\frac{H_k}{k^3}\pmod{p}.
   \end{equation}
Furthermore, using (\ref{con10}) with $m=4$ we get
   \begin{equation}\label{con17}
\sum_{k=1}^{p-1}\frac{H_{k-1}}{k^3}
=\sum_{k=1}^{p-1}\frac{\left(H_k-\frac{1}{k}\right)}{k^3}=
\sum_{k=1}^{p-1}\frac{H_k}{k^3}-\sum_{k=1}^{p-1}\frac{1}{k^4}
\equiv \sum_{k=1}^{p-1}\frac{H_{k}}{k^3}\pmod{p}.
    \end{equation}
From (\ref{con16}) and (\ref{con17}) it follows that
   \begin{equation*}
\sum_{k=1}^{p-1}\frac{H_{k-1}}{k^3}\equiv 
\sum_{k=1}^{p-1}\frac{H_k}{k^3}\equiv 0\pmod{p},
    \end{equation*}
which is actually (\ref{con12}).

Since $H_k=H_{k-1}+1/k$, for each $k=1,2,\ldots ,p-1$ we have
   \begin{equation*}\begin{split}
\frac{H_k^3}{k}-\frac{H_{k-1}^3}{k}
&=\frac{1}{k}(H_k-H_{k-1})
\left(H_k^2+H_k\left(H_k-\frac{1}{k}\right)+\left(H_k-\frac{1}{k}\right)^2
\right)\\
&=\frac{1}{k^2}\left(3H_k^2-3\frac{H_k}{k}+\frac{1}{k^2}\right)=
3\frac{H_k^2}{k^2}-3\frac{H_k}{k^3}+\frac{1}{k^4}.
  \end{split}\end{equation*}
The above identity, 
(\ref{con12}) and (\ref{con10}) of Lemma~\ref{l2.2} with $m=4$
yield
     \begin{equation}\label{con18}
\sum_{k=1}^{p-1}\frac{H_k^3}{k}-
\sum_{k=1}^{p-1}\frac{H_{k-1}^3}{k}=
3\sum_{k=1}^{p-1}\frac{H_k^2}{k^2}-3\sum_{k=1}^{p-1}\frac{H_k}{k^3}
+\sum_{k=1}^{p-1}\frac{1}{k^4}\equiv 3\sum_{k=1}^{p-1}\frac{H_k^2}{k^2}
\pmod{p}.
 \end{equation}
On the other hand, since by (\ref{con8}) from Lemma~\ref{l2.1},  
$H_k\equiv H_{p-k-1}\,(\bmod{\,p})$ for each $k=1,2,\ldots,p-1$,
then   
   \begin{equation}\label{con19}
\sum_{k=1}^{p-1}\frac{H_{k-1}^3}{k}
=\sum_{k=1}^{p-1}\frac{H_{(p-k)-1}^3}{p-k}
\equiv \sum_{k=1}^{p-1}\frac{H_k^3}{p-k} 
\equiv -\sum_{k=1}^{p-1}\frac{H_k^3}{k}\pmod{p}.
   \end{equation}
Taking (\ref{con19}) into (\ref{con18}) gives
  \begin{equation}\label{con20}
\sum_{k=1}^{p-1}\frac{H_k^3}{k}\equiv\frac{3}{2}
\sum_{k=1}^{p-1}\frac{H_k^2}{k^2}
\pmod{p},
    \end{equation}
which proves (\ref{con13}).

Proof of the congruence (\ref{con14}) is completely analogous to the previous
proof using the fact that by Lemma~\ref{l2.2}, 
$H_{p-1,3}\equiv 0\,(\bmod{\,p})$ 
for each $k=1,\ldots,p-1$, and therefore 
     $$
H_{k-1,3}=\sum_{i=1}^{k-1}\frac{1}{i^3}=
H_{p-1,3}-\sum_{j=1}^{p-k}\frac{1}{(p-j)^3}\equiv 
\sum_{j=1}^{p-k}\frac{1}{j^3}=H_{p-k,3}\pmod{p}.
   $$
Finally (cf. \cite{ty}), from the identity
     $$
H_{k-1}^3=\left(H_k-\frac{1}{k}\right)^3=H_k^3-3\frac{H_k^2}{k}+
3\frac{H_k}{k^2}-\frac{1}{k^3}
   $$
immediately follows that 
 \begin{equation}\label{con21}
\sum_{k=1}^{p-1}\frac{H_k^2}{k}-\sum_{k=1}^{p-1}\frac{H_k}{k^2}
=\frac{1}{3}\left(H_{p-1}^3-H_{p-1,3}\right).
   \end{equation}
Inserting in the right hand side of the  identity (\ref{con21})
the congruences $H_{p-1}\equiv H_{p-1,3}\equiv 0\,(\bmod{\,p^2})$
given in Lemma~\ref{l2.2}, we immediately obtain (\ref{con15}).
This  completes the proof.
\end{proof}

\begin{lemma}\label{l2.4}  
 Let $p\ge 7$ be a prime. Then
      \begin{equation}\label{con22} 
\sum_{k=1}^{p-1}\frac{H_k\cdot H_{k,2}}{k}\equiv 
\sum_{1\le i<j< k\le p-1}\frac{1}{ij^2k}+
\sum_{1\le i<j< k\le p-1}\frac{1}{i^2jk}\pmod{p}.
       \end{equation}
\end{lemma}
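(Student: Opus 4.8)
The plan is to expand $H_k$ and $H_{k,2}$ inside the sum and reorganize everything as iterated sums over triples, then discard the diagonal contributions using Wolstenholme-type vanishing from Lemma~\ref{l2.2}. Writing $H_k = \sum_{i=1}^{k}\frac1i$ and $H_{k,2}=\sum_{j=1}^{k}\frac{1}{j^2}$, we get
\[
\sum_{k=1}^{p-1}\frac{H_k\cdot H_{k,2}}{k}
=\sum_{k=1}^{p-1}\frac{1}{k}\sum_{i=1}^{k}\frac1i\sum_{j=1}^{k}\frac1{j^2}
=\sum_{\substack{1\le i\le k\le p-1\\ 1\le j\le k}}\frac{1}{i\,j^2\,k}.
\]
Now split the index set $\{(i,j,k): i\le k,\ j\le k\}$ according to the order relation between $i$ and $j$: either $i<j$, or $i>j$, or $i=j$. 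The first two regions, after further separating whether $j<k$ (resp.\ $i<k$) or $j=k$ (resp.\ $i=k$), are exactly the two strictly-ordered triple sums on the right-hand side of (\ref{con22}) plus lower-complexity pieces (double sums where two indices coincide).

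So the key step is to show that every correction term vanishes modulo $p$. These are finitely many sums of the shapes $\sum_{i<k}\frac{1}{i^2 k^2}$, $\sum_{i<k}\frac{1}{i^3 k}$, $\sum_{i<k}\frac{1}{i\,k^3}$, $\sum_{k}\frac{1}{k^4}$, and $\sum_{k}\frac{1}{k^2}\cdot(\text{something})$, arising from the cases $i=k$, $j=k$, $i=j$, and their overlaps. Each such double sum is handled by the standard symmetry trick: a sum $\sum_{1\le i<k\le p-1} i^{-a}k^{-b}$ with $a+b$ odd is $\equiv\frac12\big(H_{p-1,a}H_{p-1,b}-H_{p-1,a+b}\big)\equiv 0\pmod p$ when $a+b\le 4$ by Lemma~\ref{l2.2} (the power sums $H_{p-1,m}$ vanish mod $p$ for $2\le m\le 4$, and $H_{p-1}$ vanishes mod $p^2$), and similarly for the case $a+b$ even one uses the reflection $i\mapsto p-i$ together with $p\mid H_{p-1,m}$; the single sums $\sum_k k^{-4}$ and $\sum_k k^{-2}$ vanish mod $p$ directly by Lemma~\ref{l2.2}. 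Collecting, all correction terms drop out and we are left precisely with
\[
\sum_{k=1}^{p-1}\frac{H_k\cdot H_{k,2}}{k}\equiv
\sum_{1\le i<j<k\le p-1}\frac{1}{i\,j^2\,k}
+\sum_{1\le i<j<k\le p-1}\frac{1}{i^2\,j\,k}\pmod p,
\]
which is (\ref{con22}).

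The main obstacle I expect is purely bookkeeping: correctly enumerating the boundary cases when the partial-order regions $i<j$, $i>j$, $i=j$ are intersected with the constraints $i\le k$ and $j\le k$ (there is a three-fold overlap $i=j=k$ that must not be double-counted, and the case $i=j<k$ contributes a genuine $\sum_{i<k} i^{-3}k^{-1}$ that one must recognize as vanishing rather than as one of the target sums). Once the decomposition is laid out cleanly, each residual sum is killed by a one-line application of Lemma~\ref{l2.2}, so the only real care needed is in the combinatorial split, not in any estimate.
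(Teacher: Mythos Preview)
Your approach is essentially the same as the paper's: both expand the product into a sum over triples $(i,j,k)$ and then kill the diagonal pieces using the Wolstenholme-type vanishing of Lemma~\ref{l2.2}; the paper does this by first writing $H_k=H_{k-1}+1/k$, $H_{k,2}=H_{k-1,2}+1/k^2$ and then expanding $H_{k-1}H_{k-1,2}$, while you split directly on the order of $i,j$, but the resulting correction terms are identical. One small fix: your displayed formula $\sum_{i<k}i^{-a}k^{-b}\equiv\tfrac12(H_{p-1,a}H_{p-1,b}-H_{p-1,a+b})$ is only valid when $a=b$, and in any case all your correction sums have weight $a+b=4$ (even, not odd); what you actually need is the pairing
\[
\sum_{1\le i<k\le p-1}\frac{1}{ik^3}+\sum_{1\le i<k\le p-1}\frac{1}{i^3k}
= H_{p-1}H_{p-1,3}-H_{p-1,4}\equiv 0\pmod p,
\]
together with $\sum_{i<k}\frac{1}{i^2k^2}=\tfrac12(H_{p-1,2}^2-H_{p-1,4})\equiv 0$ and $H_{p-1,4}\equiv 0$, which suffices since only the total correction must vanish.
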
 
\begin{proof} Since $H_k=H_{k-1}+1/k$ for every $k=1,2,\ldots,p-1$, we get
        \begin{equation}\label{con23}\begin{split}
\sum_{k=1}^{p-1}\frac{H_k\cdot H_{k,2}}{k}
&=\sum_{k=1}^{p-1}\frac{1}{k}\left(H_{k-1}+\frac{1}{k}\right)
\left(H_{k-1,2}+\frac{1}{k^2}\right)\\
&=\sum_{k=1}^{p-1}\frac{H_{k-1}\cdot H_{k-1,2}}{k}+
\sum_{k=1}^{p-1}\frac{H_{k-1,2}}{k^2}+
\sum_{k=1}^{p-1}\frac{H_{k-1}}{k^3}+\sum_{k=1}^{p-1}\frac{1}{k^4}.
       \end{split}\end{equation}
Using particular congruences given in Lemma~\ref{l2.2} with 
$m=2$ and $m=4$, we find that
  \begin{equation}\begin{split}\label{con24}
          \sum_{k=1}^{p-1}\frac{H_{k-1,2}}{k^2}
&=\sum_{k=1}^{p-1}\frac{1}{k^2}\sum_{i=1}^{k-1}\frac{1}{i^2}=
\sum_{1\le i<k\le p-1}\frac{1}{i^2k^2}\\
&=\frac{1}{2}\left(\sum_{k=1}^{p-1}\frac{1}{k^2}\right)^2-\frac{1}{2}
\sum_{k=1}^{p-1}\frac{1}{k^4}\equiv 0\pmod{p}.
    \end{split}\end{equation}
Substituting the congruences  (\ref{con24}), 
 (\ref{con12}) of Lemma~\ref{l2.3} and (\ref{con10}) with $m=4$ 
of Lemma~\ref{l2.2}  into (\ref{con23}),
we obtain 
   \begin{equation}\label{con25}
\sum_{k=1}^{p-1}\frac{H_k\cdot H_{k,2}}{k}
\equiv \sum_{k=1}^{p-1}\frac{H_{k-1}\cdot H_{k-1,2}}{k}\pmod{p}.
    \end{equation}
The right hand side of (\ref{con25}) can be expressed as  
      \begin{equation}\label{con26}\begin{split}
\sum_{k=1}^{p-1}\frac{H_{k-1}\cdot H_{k-1,2}}{k}
&=\sum_{k=1}^{p-1}\frac{1}{k}\left(1+\frac{1}{2}+\cdots +
\frac{1}{k-1}\right)\left(1+\frac{1}{2^2}+\cdots +
\frac{1}{(k-1)^2}\right)\\
&= \sum_{k=1}^{p-1}\frac{1}{k}\left(\sum_{1\le i<j\le k-1}\frac{1}{ij^2}+
\sum_{1\le i<j\le k-1}\frac{1}{i^2j}+\sum_{i=1}^{k-1}\frac{1}{i^3}\right)\\
&=\sum_{1\le i<j< k\le p-1}\frac{1}{ij^2k}+
\sum_{1\le i<j< k\le p-1}\frac{1}{i^2jk}+\sum_{k=1}^{p-1}\frac{H_{k-1,3}}{k}.
       \end{split}\end{equation}
Taking (\ref{con14}) of Lemma~\ref{l2.3} into
(\ref{con26}), and comparing this with (\ref{con25}), 
 we immediately obtain (\ref{con22}).
  \end{proof}

Further, for the proof of Theorem~\ref{t1.1} 
we will need two particular 
cases of the following  identity due to V. Hern\'{a}ndez \cite{h}. 

\begin{lemma}\label{l2.5} {\rm(\cite{h})}  Let $n$ and $m$ be positive integers.
Then 
    \begin{equation}\label{con27}
\sum_{k=1}^n{n\choose k}(-1)^{k-1}\sum_{1\le i_1\le i_2\le \cdots\le i_m=k}
\frac{1}{i_1i_2\cdots i_m}=\sum_{k=1}^n\frac{1}{k^m}.
   \end{equation}
  \end{lemma}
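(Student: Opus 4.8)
The plan is to first derive a closed form for the inner sum and then reduce \eqref{con27} to two standard binomial summations. Set
\[
c_m(k):=\sum_{1\le i_1\le i_2\le\cdots\le i_m=k}\frac{1}{i_1i_2\cdots i_m},
\]
so that the left-hand side of \eqref{con27} equals $\sum_{k=1}^n\binom{n}{k}(-1)^{k-1}c_m(k)$. Because the last index is pinned at $i_m=k$, one has $c_1(k)=1/k$ and the recursion $c_m(k)=\frac1k\sum_{j=1}^k c_{m-1}(j)$ for $m\ge2$ (split off the value of $i_{m-1}$, which may be anything from $1$ to $k$).

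The key step is the closed form
\[
c_m(k)=\sum_{i=1}^k(-1)^{i-1}\binom{k-1}{i-1}\frac{1}{i^m}\qquad(k\ge1,\ m\ge1),
\]
which I would prove by induction on $m$. For $m=1$ the claim reads $\sum_{i=1}^k(-1)^{i-1}\binom{k-1}{i-1}\frac1i=\frac1k$, and this follows at once from the elementary identity $\frac1i\binom{k-1}{i-1}=\frac1k\binom{k}{i}$ together with the binomial theorem. For the inductive step, substitute the formula for $c_{m-1}$ into the recursion, interchange the two summations, and use the hockey-stick identity $\sum_{j=i}^k\binom{j-1}{i-1}=\binom{k}{i}$ together with $\binom{k}{i}=\frac{k}{i}\binom{k-1}{i-1}$; this reproduces the formula with exponent $m$.

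With the closed form in hand, insert it into the left-hand side of \eqref{con27} and interchange the order of summation (the terms with $k<i$ drop out since $\binom{k-1}{i-1}=0$), which gives
\[
\sum_{k=1}^n\binom{n}{k}(-1)^{k-1}c_m(k)=\sum_{i=1}^n\frac{(-1)^{i-1}}{i^m}\sum_{k=i}^n(-1)^{k-1}\binom{n}{k}\binom{k-1}{i-1}.
\]
It therefore suffices to prove the combinatorial identity $\sum_{k=i}^n(-1)^{k-1}\binom{n}{k}\binom{k-1}{i-1}=(-1)^{i-1}$ for all $1\le i\le n$. I would do this by induction on $n$: Pascal's rule $\binom{n}{k}=\binom{n-1}{k-1}+\binom{n-1}{k}$ splits the sum into a copy of the same sum for $n-1$ plus the term $\sum_{k=i}^n(-1)^{k-1}\binom{n-1}{k-1}\binom{k-1}{i-1}=\binom{n-1}{i-1}(-1)^{i-1}\sum_{l=0}^{n-i}(-1)^l\binom{n-i}{l}$, which vanishes when $n>i$; the base case $n=i$ is immediate. (Equivalently one may write $\binom{k-1}{i-1}=[z^{i-1}](1+z)^{k-1}$ and extract the coefficient of $z^{i-1}$ in $\bigl(1-(1+z)\bigr)^n$, treating the missing low-order terms by hand.) Substituting this value back collapses the double sum to $\sum_{i=1}^n i^{-m}=H_{n,m}$, which is exactly the right-hand side of \eqref{con27}.

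I expect the only genuine obstacle to be locating and proving the closed form for $c_m(k)$; once it is available, the rest is a routine manipulation of binomial coefficients. If one prefers not to guess the formula, it can be read off from the generating function $\sum_{m\ge1}c_m(k)t^m=\frac{t\,(k-1)!}{(1-t)(2-t)\cdots(k-t)}$ by a partial-fraction decomposition, but the inductive verification sketched above is elementary and entirely self-contained.
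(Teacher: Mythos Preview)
Your argument is correct. The closed form $c_m(k)=\sum_{i=1}^k(-1)^{i-1}\binom{k-1}{i-1}i^{-m}$ does satisfy the recursion $c_m(k)=\frac1k\sum_{j=1}^k c_{m-1}(j)$ via the hockey-stick identity and $\binom{k}{i}=\frac{k}{i}\binom{k-1}{i-1}$, and the inner binomial sum $\sum_{k=i}^n(-1)^{k-1}\binom{n}{k}\binom{k-1}{i-1}=(-1)^{i-1}$ follows cleanly from the trinomial revision $\binom{n-1}{k-1}\binom{k-1}{i-1}=\binom{n-1}{i-1}\binom{n-i}{k-i}$ exactly as you outline. Everything collapses to $H_{n,m}$.

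As for comparison: the paper does not prove this lemma at all---it simply quotes it as Hern\'{a}ndez's identity from \cite{h} and uses the cases $m=2,3$ with $n=p-1$. So your proposal supplies a self-contained elementary proof where the paper has none. (Hern\'{a}ndez's original argument in \cite{h} is a short induction on $n$ based on Pascal's rule applied directly to the outer binomial $\binom{n}{k}$, rather than first isolating a closed form for the inner sum; your route via the closed form for $c_m(k)$ is slightly longer but has the bonus of making the structure of the inner sum explicit, which can be useful elsewhere.)
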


\begin{lemma}\label{l2.6}   Let $p\ge 7$ be a prime. Then
  \begin{equation}\label{con28}
\sum_{k=1}^{p-1}\frac{H_k\cdot H_{k,2}}{k}\equiv -\frac{3}{2}
\sum_{k=1}^{p-1}\frac{H_k^2}{k^2}\pmod{p}.
   \end{equation}
\end{lemma}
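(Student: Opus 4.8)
The plan is to feed the case $m=3$ of Hern\'{a}ndez's identity (Lemma~\ref{l2.5}) with $n=p-1$ into the classical congruences collected above, and to close with (\ref{con13}) of Lemma~\ref{l2.3}, which converts $\sum_k H_k^3/k$ into $\tfrac32\sum_k H_k^2/k^2$.

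First I would make the $m=3$ case of Lemma~\ref{l2.5} explicit. Since $\sum_{1\le i_1\le i_2\le k}1/(i_1i_2)=\tfrac12(H_k^2+H_{k,2})$, pulling out the factor $i_3=k$ gives $\sum_{1\le i_1\le i_2\le i_3=k}1/(i_1i_2i_3)=(H_k^2+H_{k,2})/(2k)$, so with $n=p-1$ the identity reads
$$\sum_{k=1}^{p-1}(-1)^{k-1}{p-1\choose k}\frac{H_k^2+H_{k,2}}{k}=2H_{p-1,3}.$$
Next I would substitute the reduction modulo $p^2$ of (\ref{con9}), namely $(-1)^{k-1}{p-1\choose k}\equiv-1+pH_k\pmod{p^2}$. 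Because each $(H_k^2+H_{k,2})/k$ is a fraction with denominator prime to $p$ and, by Lemma~\ref{l2.2}, $H_{p-1,3}\equiv0\pmod{p^2}$, the displayed identity reduces modulo $p^2$ to
$$\sum_{k=1}^{p-1}\frac{H_k^2+H_{k,2}}{k}\equiv p\left(\sum_{k=1}^{p-1}\frac{H_k^3}{k}+\sum_{k=1}^{p-1}\frac{H_k\cdot H_{k,2}}{k}\right)\pmod{p^2}.$$

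The step I expect to be the crux is showing that the left-hand side here is in fact $\equiv0\pmod{p^2}$, so that cancelling $p$ leaves $\sum_k H_k^3/k+\sum_k H_kH_{k,2}/k\equiv0\pmod p$. To see this I would write $\tfrac12(H_k^2+H_{k,2})=\sum_{1\le i\le j\le k}1/(ij)$, sum over $k$, and recognise $\sum_{1\le i\le j\le k\le p-1}1/(ijk)$ as the degree-$3$ complete homogeneous symmetric function $h_3$ of the numbers $1,\tfrac12,\dots,\tfrac1{p-1}$; the Newton--Girard relation $6h_3=p_1^3+3p_1p_2+2p_3$, with $p_r=H_{p-1,r}$, gives
$$\sum_{k=1}^{p-1}\frac{H_k^2+H_{k,2}}{k}=\frac{H_{p-1}^3+3H_{p-1}H_{p-1,2}+2H_{p-1,3}}{3}.$$
By Lemma~\ref{l2.2}, $H_{p-1}\equiv H_{p-1,3}\equiv0\pmod{p^2}$, so every summand in the numerator is $\equiv0\pmod{p^2}$; since $p>3$ we may divide by $3$ and conclude $\sum_{k=1}^{p-1}(H_k^2+H_{k,2})/k\equiv0\pmod{p^2}$.

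Putting the two displays together gives $\sum_{k=1}^{p-1}H_k^3/k+\sum_{k=1}^{p-1}H_k\cdot H_{k,2}/k\equiv0\pmod p$, and substituting $\sum_{k=1}^{p-1}H_k^3/k\equiv\tfrac32\sum_{k=1}^{p-1}H_k^2/k^2\pmod p$ from (\ref{con13}) yields (\ref{con28}). The one subtlety is the $p$-adic bookkeeping: one multiplies a congruence valid only modulo $p^2$ by the $p$-integral quantities $(H_k^2+H_{k,2})/k$, sums $p-1$ of them, and divides once by $p$ at the end, which is legitimate precisely because every denominator that appears is prime to $p$. I note in passing that Lemma~\ref{l2.4} provides an alternative handle on $\sum_k H_kH_{k,2}/k$ in terms of the triple sums $\sum 1/(ij^2k)+\sum1/(i^2jk)$, which one can also analyse modulo $p$ via the reflection $i\mapsto p-i,\ j\mapsto p-j,\ k\mapsto p-k$; I would nonetheless prefer the route above, which stays entirely within the harmonic-sum congruences already at hand.
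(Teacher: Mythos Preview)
Your proof is correct and follows essentially the same route as the paper's: Hern\'{a}ndez's identity with $m=3$ and $n=p-1$, the reduction $(-1)^{k-1}\binom{p-1}{k}\equiv pH_k-1\pmod{p^2}$, the vanishing of $\sum_k(H_k^2+H_{k,2})/k$ modulo $p^2$, division by $p$, and finally (\ref{con13}). The only difference is at the crux step: you obtain $\sum_k(H_k^2+H_{k,2})/k=\tfrac13\bigl(H_{p-1}^3+3H_{p-1}H_{p-1,2}+2H_{p-1,3}\bigr)$ directly from the Newton--Girard relation $6h_3=p_1^3+3p_1p_2+2p_3$, whereas the paper first invokes (\ref{con15}) to replace $\sum_k H_k^2/k$ by $\sum_k H_k/k^2$ and then writes the resulting sum as $H_{p-1}H_{p-1,2}+H_{p-1,3}$; your shortcut avoids the appeal to (\ref{con15}) and is marginally cleaner, but the two computations are otherwise interchangeable.
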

   \begin{proof}
The identity (\ref{con27}) of Lemma~\ref{l2.5}
with $m=3$ and $n=p-1$ becomes
  \begin{equation}\label{con29}
\sum_{k=1}^{p-1}\frac{(-1)^{k-1}}{k}{p-1\choose k}\sum_{1\le i\le j\le k}
\frac{1}{ij}=\sum_{k=1}^{p-1}\frac{1}{k^3}=H_{p-1,3}.
   \end{equation}
For any fixed $k\le p-1$, we have the identity
   \begin{equation}\label{con30}
\sum_{1\le i\le j\le k}\frac{1}{ij}=\frac{1}{2}
\left(\left(\sum_{i=1}^k\frac{1}{i}\right)^2+
\sum_{i=1}^k\frac{1}{i^2}\right)=\frac{1}{2}(H_k^2+H_{k,2}).
  \end{equation}
Next the congruence (\ref{con9}) from Lemma~\ref{l2.1}
reduced modulo $p^2$ gives
        \begin{equation}\label{con31}
(-1)^{k-1}{p-1\choose k}\equiv pH_k-1\pmod{p^2}
            \end{equation}
for every $k=1,2,\ldots,p-1$. Substituting (\ref{con30}), (\ref{con31})
and the congruence  $H_{p-1,3}\equiv 0\,(\bmod{\,p^2})$ 
of Lemma~\ref{l2.2} into (\ref{con29}), we immediately obtain 
         \begin{equation*}
\sum_{k=1}^{p-1}\frac{1}{k}(pH_k-1)(H_k^2+H_{k,2})\equiv 0\pmod{p^2},
     \end{equation*}
or equivalently,
  \begin{equation}\label{con32}
p\left(\sum_{k=1}^{p-1}\frac{H_k^3}{k}+
\sum_{k=1}^{p-1}\frac{H_k\cdot H_{k,2}}{k}\right)
-\left(\sum_{k=1}^{p-1}\frac{H_k^2}{k}
+\sum_{k=1}^{p-1}\frac{H_{k,2}}{k}\right)
\equiv 0\pmod{p^2}.
  \end{equation}
Further, (\ref{con15}) from Lemma~\ref{l2.3}   
and the congruences $H_{p-1}\equiv H_{p-1,3}\equiv 0\,(\bmod{\,p^2})$
from Lemma~\ref{l2.2} give
\begin{equation}\label{con33}\begin{split}
\sum_{k=1}^{p-1}\frac{H_k^2}{k}+\sum_{k=1}^{p-1}\frac{H_{k,2}}{k}
&\equiv \sum_{k=1}^{p-1}\frac{H_k}{k^2}+\sum_{k=1}^{p-1}\frac{H_{k,2}}{k}
\pmod{p^2}\\
&=\sum_{1\le i\le k\le p-1}\frac{1}{ik^2}
+\sum_{1\le i\le k\le p-1}\frac{1}{i^2k}\\
&=\left(\sum_{i=1}^{p-1}\frac{1}{i}\right)
\left(\sum_{k=1}^{p-1}\frac{1}{k^2}\right)+\sum_{k=1}^{p-1}\frac{1}{i^3}\\
&=H_{p-1}\cdot H_{p-1,2}+H_{p-1,3}\equiv 0\pmod{p^2}.
  \end{split}\end{equation}
Substituting (\ref{con33}) into (\ref{con32}), we find that
 \begin{equation}\label{con34}
\sum_{k=1}^{p-1}\frac{H_k^3}{k}+\sum_{k=1}^{p-1}\frac{H_k\cdot H_{k,2}}{k}
\equiv 0\pmod{p}.
   \end{equation}
Taking (\ref{con13}) of Lemma~\ref{l2.3} into 
(\ref{con34}) yields (\ref{con28}). This concludes the proof.
\end{proof}

\begin{lemma}\label{l2.7}   Let $p\ge 7$ be a prime. Then
      \begin{equation}\label{con35} 
   \sum_{1\le i<j< k\le p-1}\frac{1}{i^2jk}
\equiv\sum_{1\le i<j< k\le p-1}\frac{1}{ijk^2}
\equiv -\frac{1}{2}\sum_{1\le i<j< k\le p-1}\frac{1}{ij^2k}
\pmod{p}.
   \end{equation}
 \end{lemma}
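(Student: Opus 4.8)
The plan is to split (\ref{con35}) into two statements and recombine. Write
$A=\sum_{1\le i<j<k\le p-1}\frac{1}{i^{2}jk}$,
$B=\sum_{1\le i<j<k\le p-1}\frac{1}{ij^{2}k}$,
$C=\sum_{1\le i<j<k\le p-1}\frac{1}{ijk^{2}}$,
so that (\ref{con35}) is the assertion $A\equiv C\equiv-\tfrac12 B\pmod p$. It suffices to prove (i) $A\equiv C\pmod p$ and (ii) $A+B+C\equiv 0\pmod p$: indeed, substituting (i) into (ii) gives $2C+B\equiv 0\pmod p$, whence $C\equiv-\tfrac12 B$ (here $p$ odd is used), and then $A\equiv C\equiv-\tfrac12 B\pmod p$.

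For (i) I would use the order-reversing involution $(i,j,k)\mapsto(p-k,\,p-j,\,p-i)$ of the set of triples with $1\le i<j<k\le p-1$. Since $(p-k)^{2}(p-j)(p-i)\equiv k^{2}ji\pmod p$, this substitution carries $A=\sum 1/(i^{2}jk)$ to $\sum 1/(ijk^{2})=C$ modulo $p$, which is exactly (i). (The same involution maps $B$ to itself modulo $p$, which is why $B$ is the distinguished summand.)

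For (ii) the idea is to expand the product of power sums $H_{p-1}^{2}H_{p-1,2}=\bigl(\sum_{a}\frac1a\bigr)\bigl(\sum_{b}\frac1b\bigr)\bigl(\sum_{c}\frac1{c^{2}}\bigr)=\sum_{a,b,c}\frac{1}{abc^{2}}$ (all indices in $\{1,\dots,p-1\}$), by sorting each triple $(a,b,c)$ according to the relative order of its entries and to which entries coincide. Collecting terms---with a factor $2$ arising in each ``all distinct'' case from the two orderings of the two exponent-$1$ variables $a,b$---yields
\begin{align*}
H_{p-1}^{2}H_{p-1,2}&=2(A+B+C)+2\sum_{i<j}\frac{1}{i^{2}j^{2}}\\
&\quad{}+2\Bigl(\sum_{i<j}\frac{1}{ij^{3}}+\sum_{i<j}\frac{1}{i^{3}j}\Bigr)+H_{p-1,4},
\end{align*}
where $i,j$ run over $1\le i<j\le p-1$. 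Now reduce mod $p$. The left-hand side is $\equiv 0$ since $H_{p-1}\equiv 0\pmod p$ (Wolstenholme's theorem, Lemma~\ref{l2.2}). On the right-hand side: $H_{p-1,4}\equiv 0\pmod p$ by Lemma~\ref{l2.2} (this is where $p\ge 7$ enters); $\sum_{i<j}1/(i^{2}j^{2})=\tfrac12\bigl(H_{p-1,2}^{2}-H_{p-1,4}\bigr)\equiv 0\pmod p$, exactly as in (\ref{con24}); and $\sum_{i<j}1/(ij^{3})+\sum_{i<j}1/(i^{3}j)=H_{p-1}H_{p-1,3}-H_{p-1,4}\equiv 0\pmod p$. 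Hence $2(A+B+C)\equiv 0\pmod p$, and dividing by $2$ gives (ii).

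I expect the one delicate point to be the combinatorial bookkeeping behind the displayed identity for $H_{p-1}^{2}H_{p-1,2}$: keeping track of the factor $2$ from ordering the two exponent-$1$ variables in the generic configurations, and correctly matching each degenerate configuration (two of $a,b,c$ equal) with one of $\sum 1/(i^{2}j^{2})$, $\sum 1/(ij^{3})+\sum 1/(i^{3}j)$, or $H_{p-1,4}$. Everything else reduces to Wolstenholme's theorem and Lemma~\ref{l2.2}.
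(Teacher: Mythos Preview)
Your proof is correct. Part~(i) is identical to the paper's argument via the substitution $(i,j,k)\mapsto(p-k,p-j,p-i)$. For part~(ii) the paper takes a different route: it multiplies the elementary symmetric sum $e_3=\sum_{i<j<k}\frac{1}{ijk}$ by $H_{p-1}$ to obtain the identity
\[
\Bigl(\sum_{1\le i<j<k\le p-1}\frac{1}{ijk}\Bigr)H_{p-1}=A+B+C+4\sum_{1\le i<j<k<l\le p-1}\frac{1}{ijkl},
\]
then invokes Newton's identities to show that $\sum_{i<j<k<l}\frac{1}{ijkl}\equiv 0\pmod p$ (since that identity expresses $4e_4$ in terms of power sums $H_{p-1,m}$, all of which vanish mod~$p$). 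Your expansion of $H_{p-1}^{2}H_{p-1,2}$ reaches the same conclusion without the detour through $e_4$ and Newton's identities, at the cost of the case-by-case bookkeeping you flag; the paper's version keeps the combinatorics lighter but imports an external identity. Either way the reductions rest on the same inputs from Lemma~\ref{l2.2}.
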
 
 \begin{proof}

For simplicity, we denote
   $$
A:=\sum_{1\le i<j< k\le p-1}\frac{1}{i^2jk},\quad
B:=\sum_{1\le i<j< k\le p-1}\frac{1}{ij^2k},\quad\mathrm{and}\quad
C:=\sum_{1\le i<j< k\le p-1}\frac{1}{ijk^2}. 
    $$
Obviously, holds the identity
 \begin{equation}\label{con36} 
\left(\sum_{1\le i<j<k\le p-1}\frac{1}{ijk}\right)
\left(\sum_{l=1}^{p-1}\frac{1}{l}\right)=A+B+C+
4\sum_{1\le i<j<k<l\le p-1}\frac{1}{ijkl}.
\end{equation}
The well known Newton's identities (see e.g., \cite{m}) imply 
     \begin{eqnarray*}
4\sum_{1\le i<j<k<l\le p-1}\frac{1}{ijkl}
&=&\sum_{1\le i<j<k\le p-1}\frac{1}{ijk}H_{p-1}-
\sum_{1\le i<j\le p-1}\frac{1}{ij}H_{p-1,2}\\
&&+\sum_{i=1}^{p-1}\frac{1}{i}H_{p-1,3}-H_{p-1,4},
  \end{eqnarray*}
whence since all the sums $H_{p-1}, H_{p-1,2}, H_{p-1,3}$ and $H_{p-1,4}$ 
are divisible by a prime $p\ge 7$, we obtain 
(cf. \cite[Theorem 1.5]{z} or \cite{zc})
  \begin{equation}\label{con37} 
\sum_{1\le i<j<k<l\le p-1}\frac{1}{ijkl}\equiv 0\pmod{p}.
   \end{equation}
Inserting (\ref{con37}) and 
$H_{p-1}=\sum_{i=1}^{p-1}1/i \equiv 0\,(\bmod{\,p})$
into (\ref{con36}), we get
   \begin{equation}\label{con38} 
A+B+C\equiv 0\pmod{p}.  
  \end{equation}
Further, by the substitution trick $i,j,k\to p-i,p-j,p-k$,
      \begin{equation}\label{con39}\begin{split}
A&=\sum_{1\le i<j< k\le p-1}\frac{1}{i^2jk}
=\sum_{1\le p-i<p-j< p-k\le p-1}\frac{1}{(p-i)^2(p-j)(p-k)}\\
&\equiv  \sum_{1\le k<j< i\le p-1}\frac{1}{kji^2}=C\pmod{p}.
  \end{split}\end{equation}
From (\ref{con39}) we see that $C\equiv A \,(\bmod{\,p})$, which substituting 
into (\ref{con38}) gives 
   \begin{equation}\label{con40} 
2A+B\equiv 0\pmod{p}.  
  \end{equation}
Finally, (\ref{con39}) and (\ref{con40}) yield 
$C\equiv A\equiv -B/2 \,(\bmod{\,p})$, 
as desired.
\end{proof}

\begin{lemma}\label{l2.8}   Let $p\ge 7$ be a prime. Then
  \begin{equation}\label{con41}
\sum_{k=1}^{p-1}\frac{H_k^2}{k^2}\equiv 
-\sum_{1\le i<j< k\le p-1}\frac{1}{ij^2k}\pmod{p}.
   \end{equation}
\end{lemma}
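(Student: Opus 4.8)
The plan is to expand $H_k^2$ into a diagonal part plus a strictly‑ordered pair sum, discard the pieces that vanish modulo $p$ by lemmas already in hand, and identify what remains with one of the triple sums controlled by Lemma~\ref{l2.7}.

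First I would use the elementary identity $H_k^2=H_{k,2}+2\sum_{1\le i<j\le k}1/(ij)$, valid for every $k$, to write
$\sum_{k=1}^{p-1}H_k^2/k^2=\sum_{k=1}^{p-1}H_{k,2}/k^2+2\sum_{k=1}^{p-1}k^{-2}\sum_{1\le i<j\le k}1/(ij)$.
The first sum on the right equals $\sum_{1\le i\le k\le p-1}1/(i^2k^2)=\tfrac12 H_{p-1,2}^2+\tfrac12 H_{p-1,4}$, which is $\equiv 0\pmod p$ by Lemma~\ref{l2.2}; this is the same computation already carried out in (\ref{con24}).

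For the remaining double sum I would split the inner pair sum according to whether $j=k$ or $j\le k-1$, namely $\sum_{1\le i<j\le k}1/(ij)=H_{k-1}/k+\sum_{1\le i<j\le k-1}1/(ij)$. Multiplying by $k^{-2}$ and summing over $k$, the first term contributes $\sum_{k=1}^{p-1}H_{k-1}/k^3$, which is $\equiv 0\pmod p$ by (\ref{con12}) of Lemma~\ref{l2.3}, while the second term contributes precisely $\sum_{1\le i<j<k\le p-1}1/(ijk^2)$ (here $j\le k-1$ is the same as $j<k$). Hence $\sum_{k=1}^{p-1}H_k^2/k^2\equiv 2\sum_{1\le i<j<k\le p-1}1/(ijk^2)\pmod p$. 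Finally, Lemma~\ref{l2.7} gives $\sum_{1\le i<j<k\le p-1}1/(ijk^2)\equiv -\tfrac12\sum_{1\le i<j<k\le p-1}1/(ij^2k)\pmod p$; substituting this yields (\ref{con41}).

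I do not expect a genuine obstacle here: the only delicate point is the bookkeeping of the ``boundary'' index $j=k$ when passing from ordered pairs $\{i<j\le k\}$ to $\{i<j<k\}$, together with the observation that this boundary term and the diagonal term $H_{k,2}/k^2$ are both annihilated modulo $p$ by the Wolstenholme‑type vanishing of Lemma~\ref{l2.2} and by the auxiliary congruence (\ref{con12}).
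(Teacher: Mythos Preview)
Your argument is correct. The decomposition $H_k^2=H_{k,2}+2\sum_{1\le i<j\le k}1/(ij)$ together with the boundary split $j=k$ versus $j<k$ indeed gives
\[
\sum_{k=1}^{p-1}\frac{H_k^2}{k^2}\equiv 2\sum_{1\le i<j<k\le p-1}\frac{1}{ijk^2}\pmod p,
\]
after discarding the two pieces killed by Lemma~\ref{l2.2} and (\ref{con12}); the final step via Lemma~\ref{l2.7} is legitimate since that lemma is proved earlier and independently of Lemma~\ref{l2.8}.

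The paper proceeds differently: it starts from the triple sum $\sum_{1\le i<j<k\le p-1}1/(ij^2k)$, factors it as $\sum_j j^{-2}H_{j-1}\bigl(\sum_{k>j}1/k\bigr)$, and replaces the tail $\sum_{k>j}1/k$ by $-H_j$ using $H_{p-1}\equiv 0$. This yields $-\sum_j H_j^2/j^2+\sum_j H_j/j^3$, and the second term vanishes by (\ref{con12}) and Lemma~\ref{l2.2}. Thus the paper relates $\sum H_k^2/k^2$ directly to the ``middle-squared'' sum $B=\sum 1/(ij^2k)$ without passing through Lemma~\ref{l2.7}, whereas you naturally land on $C=\sum 1/(ijk^2)$ and then invoke $C\equiv -B/2$. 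Your route is arguably more mechanical---it is just index bookkeeping on $H_k^2$---but it imports the Newton-identity machinery behind Lemma~\ref{l2.7}; the paper's reflection trick on the outer index $k$ is more self-contained and avoids that dependency.
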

  \begin{proof}
We follow  proof of the congruence (3) in Theorem 1.1 of 
 \cite{me}. By  Lemma~\ref{l2.2},  
$H_{p-1}:=\sum_{j=1}^{p-1}1/j\equiv 0\,(\bmod{\,p})$,
or equivalently, for each $j=1,2,\ldots,p-2$ holds
 \begin{equation}\label{con42}
\frac{1}{j+1}+\frac{1}{j+2}+\cdots +\frac{1}{p-1}\equiv -\left(
1+\frac{1}{2}+\cdots +\frac{1}{j-1}+\frac{1}{j}\right)\pmod{p^2}.
 \end{equation}
Applying  the congruence (\ref{con42}), we find that 
    \begin{equation*}\begin{split}
&\sum_{1\le i<j< k\le p-1}\frac{1}{ij^2k}
=\sum_{j=2}^{p-1}\frac{1}{j^2}\sum_{i=1}^{j-1}\frac{1}{i}
\sum_{k=j+1}^{p-1}\frac{1}{k}\\
&=\sum_{j=2}^{p-1}\frac{1}{j^2}
\left(1+\frac{1}{2}+\cdots +\frac{1}{j-1}\right)
\left(\frac{1}{j+1}+\frac{1}{j+2}+\cdots +\frac{1}{p-1}\right)\\
&\equiv \sum_{j=2}^{p-1}\frac{1}{j^2}
\left(1+\frac{1}{2}+\cdots +\frac{1}{j-1}\right)
\left(-\left(1+\frac{1}{2}+\cdots +\frac{1}{j}\right)\right)\pmod{p}\\
&=\sum_{j=1}^{p-1}\frac{1}{j^2}\left(H_{j}-\frac{1}{j}\right)
\left(-H_j\right)=-\sum_{j=1}^{p-1}\frac{H_j^2}{j^2}+
\sum_{j=1}^{p-1}\frac{H_j}{j^3}\\
&=-\sum_{j=1}^{p-1}\frac{H_j^2}{j^2}+\sum_{j=1}^{p-1}\frac{H_{j-1}
+\frac{1}{j}}{j^3}=
-\sum_{j=1}^{p-1}\frac{H_j^2}{j^2}+\sum_{1\le i<j\le p-1}\frac{1}{ij^3}
+\sum_{j=1}^{p-1}\frac{1}{j^4}\\
&=-\sum_{j=1}^{p-1}\frac{H_j^2}{j^2}+\sum_{j=1}^{p-1}\frac{H_{j-1}}{j^3}
+H_{p-1,4}\hfill\qquad\qquad\qquad\pmod{p},
   \end{split}\end{equation*}
whence it follows that 
  \begin{equation}\label{con43}
 \sum_{j=1}^{p-1}\frac{H_j^2}{j^2}\equiv \sum_{j=1}^{p-1}\frac{H_{j-1}}{j^3}
+H_{p-1,4}-\sum_{1\le i<j< k\le p-1}\frac{1}{ij^2k}\pmod{p}.
  \end{equation}
Since by the first part of (\ref{con12}) from Lemma~\ref{l2.3} and
by (\ref{con10}) of Lemma~\ref{l2.2} with $m=4$,
  \begin{equation*}
\sum_{j=1}^{p-1}\frac{H_{j-1}}{j^3}\equiv
H_{p-1,4}\equiv 0\pmod{p},
  \end{equation*}
substituting this into (\ref{con43}), we obtain 
(\ref{con41}).
     \end{proof}
The first congruence of the following result was recently established by 
Z. W. Sun \cite[Theorem 1.1 (1.5)]{s}.

\begin{lemma}\label{l2.9}   Let $p\ge 7$ be a prime. Then
  \begin{equation}\label{con44}
\sum_{k=1}^{p-1}\frac{H_k^2}{k^2}\equiv 
0\pmod{p},
   \end{equation}
 \begin{equation}\label{con45}
\sum_{k=1}^{p-1}\frac{H_k\cdot H_{k,2}}{k}\equiv 0\pmod{p}
   \end{equation}
and
 \begin{equation}\label{con46}
\sum_{k=1}^{p-1}\frac{H_k^3}{k}\equiv 0\pmod{p}.
  \end{equation}

\end{lemma}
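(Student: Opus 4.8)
The plan is to combine Lemmas~\ref{l2.4}, \ref{l2.6}, \ref{l2.7} and~\ref{l2.8} into a single linear relation that forces a common triple sum to vanish, after which the three desired congruences fall out in sequence. Following the notation of Lemma~\ref{l2.7}, put $A=\sum_{1\le i<j<k\le p-1}1/(i^2jk)$, $B=\sum_{1\le i<j<k\le p-1}1/(ij^2k)$ and $C=\sum_{1\le i<j<k\le p-1}1/(ijk^2)$.

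First I would evaluate, in two different ways modulo $p$, the sum $S:=\sum_{k=1}^{p-1}H_k\cdot H_{k,2}/k$. On the one hand, (\ref{con22}) of Lemma~\ref{l2.4} gives $S\equiv B+A\pmod p$, while (\ref{con35}) of Lemma~\ref{l2.7} gives $A\equiv-B/2\pmod p$; hence $S\equiv B/2\pmod p$. On the other hand, (\ref{con28}) of Lemma~\ref{l2.6} gives $S\equiv-\tfrac32\sum_{k=1}^{p-1}H_k^2/k^2\pmod p$, and (\ref{con41}) of Lemma~\ref{l2.8} gives $\sum_{k=1}^{p-1}H_k^2/k^2\equiv-B\pmod p$; hence $S\equiv\tfrac32 B\pmod p$.

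Equating the two expressions for $S$ yields $\tfrac12 B\equiv\tfrac32 B\pmod p$, that is $B\equiv0\pmod p$ (here $p\ge7$ guarantees $2$ is invertible, as assumed throughout). Then (\ref{con41}) of Lemma~\ref{l2.8} gives at once $\sum_{k=1}^{p-1}H_k^2/k^2\equiv-B\equiv0\pmod p$, which is (\ref{con44}). Substituting this into (\ref{con28}) of Lemma~\ref{l2.6} gives $\sum_{k=1}^{p-1}H_k\cdot H_{k,2}/k\equiv-\tfrac32\cdot0\equiv0\pmod p$, which is (\ref{con45}). Finally, (\ref{con13}) of Lemma~\ref{l2.3} gives $\sum_{k=1}^{p-1}H_k^3/k\equiv\tfrac32\sum_{k=1}^{p-1}H_k^2/k^2\equiv0\pmod p$, which is (\ref{con46}).

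Since the substantial analytic and combinatorial work is already contained in the preceding lemmas, I do not anticipate a genuine obstacle; the one point requiring care is bookkeeping — making sure both evaluations of $S$ refer to the same strictly ordered triple sums $i<j<k$ and that the rational coefficients $-1/2$ and $3/2$ combine to leave a non-degenerate relation (coefficient $-B$, not $0$), so that $B\equiv0\pmod p$ really does follow. It is also worth noting that (\ref{con44}) reproves the congruence $\sum_{k=1}^{p-1}H_k^2/k^2\equiv0\pmod p$ attributed to Z.\,W. Sun just before the statement, which serves as an internal consistency check on the chain of lemmas.
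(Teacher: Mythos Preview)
Your proof is correct and follows essentially the same approach as the paper: both combine Lemmas~\ref{l2.4}, \ref{l2.6}, \ref{l2.7} and~\ref{l2.8} into a linear system in $B$ that forces $B\equiv 0\pmod p$, and then read off the three congruences from (\ref{con41}), (\ref{con28}) and (\ref{con13}). The only cosmetic difference is that the paper pivots on $\sum_{k=1}^{p-1}H_k^2/k^2$ (obtaining $-\tfrac13 B\equiv -B$) while you pivot on $S=\sum_{k=1}^{p-1}H_kH_{k,2}/k$ (obtaining $\tfrac12 B\equiv\tfrac32 B$); the underlying linear algebra is identical.
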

  \begin{proof}
Comparing the congruences (\ref{con22}) of Lemma~\ref{l2.4} and 
(\ref{con28}) of Lemma~\ref{l2.6}, we have 
        \begin{equation}\label{con47} 
\sum_{k=1}^{p-1}\frac{H_k^2}{k^2}\equiv 
-\frac{2}{3}\left(\sum_{1\le i<j< k\le p-1}\frac{1}{ij^2k}+
\sum_{1\le i<j< k\le p-1}\frac{1}{i^2jk}\right)\pmod{p}.
       \end{equation}
Since by (\ref{con35}) of Lemma~\ref{l2.7},
  \begin{equation*} 
   \sum_{1\le i<j< k\le p-1}\frac{1}{i^2jk}
\equiv -\frac{1}{2}\sum_{1\le i<j< k\le p-1}\frac{1}{ij^2k}
\pmod{p},
   \end{equation*}
then substituting this into (\ref{con47}), we obtain
 \begin{equation}\label{con48} 
\sum_{k=1}^{p-1}\frac{H_k^2}{k^2}\equiv 
-\frac{1}{3}\sum_{1\le i<j< k\le p-1}\frac{1}{ij^2k}\pmod{p}.
       \end{equation}
Finally,  as by (\ref{con41}) of Lemma~\ref{l2.8},
  \begin{equation*}
\sum_{k=1}^{p-1}\frac{H_k^2}{k^2}\equiv 
-\sum_{1\le i<j< k\le p-1}\frac{1}{ij^2k}\pmod{p},
   \end{equation*}
then comparing this with (\ref{con48}) implies 
  \begin{equation*}
\sum_{k=1}^{p-1}\frac{H_k^2}{k^2}\equiv 
0\pmod{p},
   \end{equation*}
which coincides with (\ref{con44}). 

Finally, (\ref{con44}) and (\ref{con28}) of Lemma~\ref{l2.6} 
yield (\ref{con45}), while (\ref{con44}) and (\ref{con13}) of Lemma~\ref{l2.3} 
yield (\ref{con46}).
  \end{proof}

 \noindent{\bf Remarks.} 
Applying a standard technique expressing 
sum of powers  in terms of  Bernoulli numbers, 
Z. W. Sun in \cite[Proof of (1.5) of Theorem]{s} showed that 
   \begin{equation*}
\sum_{k=1}^{p-1}\frac{H_k^2}{k^2}\equiv -\sum_{j=0}^{p-3}
B_{j}B_{p-3-j}\pmod{p}.
   \end{equation*}
The above congruence and  (\ref{con44}) yield 
the following  curious congruence for a prime $p\ge 7$ established by J. Zhao 
\cite[(3.19) of Corollary 3.6]{z}:
    $$
\sum_{j=0}^{p-3}B_{j}B_{p-3-j}\equiv 0\pmod{p}.
  $$
As noticed in \cite[Proof of Lemma 2.8]{s}, the above
congruence immediately follows from an identity of
Matiyasevich (cf. \cite[(1.3)]{ps}).

Furthermore, the  congruences (\ref{con44}), (\ref{con48}) 
and (\ref{con35}) from Lemma~\ref{l2.7} immediately give
  \begin{equation}\label{con49} 
   \sum_{1\le i<j< k\le p-1}\frac{1}{i^2jk}
\equiv\sum_{1\le i<j< k\le p-1}\frac{1}{ijk^2}
\equiv \sum_{1\le i<j< k\le p-1}\frac{1}{ij^2k}\equiv 0
\pmod{p}.
   \end{equation}
Notice that the congruences (\ref{con49})  were  proved by  J. Zhao 
\cite[Corollary 3.6 (3.20)]{z}
applying a technique expressing sum of powers in terms 
of Bernoulli numbers. 

\vspace{1mm}

The following result is contained in \cite[Lemma 2.4]{me2}.
\begin{lemma}\label{l2.10}   Let $p\ge 7$ be a prime. Then
  \begin{equation}\label{con50}
2\sum_{k=1}^{p-1}\frac{1}{k}\equiv 
-p\sum_{k=1}^{p-1}\frac{1}{k^2}\pmod{p^4}.
   \end{equation}
\end{lemma}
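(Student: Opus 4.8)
The plan is to exploit the symmetry $k\leftrightarrow p-k$ in $H_{p-1}=\sum_{k=1}^{p-1}1/k$, exactly as in the classical proof of Wolstenholme's theorem, but pushed one order of $p$ further. First I would write
$$2H_{p-1}=\sum_{k=1}^{p-1}\left(\frac{1}{k}+\frac{1}{p-k}\right)=\sum_{k=1}^{p-1}\frac{p}{k(p-k)}=p\sum_{k=1}^{p-1}\frac{1}{k(p-k)},$$
so that $H_{p-1}=\frac{p}{2}\sum_{k=1}^{p-1}1/\bigl(k(p-k)\bigr)$.

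Next I would expand $1/(p-k)$ $p$-adically to precision $p^4$. Summing the finite geometric series,
$$-\frac{1}{k}\left(1+\frac{p}{k}+\frac{p^2}{k^2}+\frac{p^3}{k^3}\right)=\frac{1}{p-k}-\frac{p^4}{k^4(p-k)},$$
so that $\frac{1}{p-k}\equiv-\frac{1}{k}-\frac{p}{k^2}-\frac{p^2}{k^3}-\frac{p^3}{k^4}\pmod{p^4}$ and hence
$$\frac{1}{k(p-k)}\equiv-\frac{1}{k^2}-\frac{p}{k^3}-\frac{p^2}{k^4}-\frac{p^3}{k^5}\pmod{p^4}.$$
Summing over $k=1,\dots,p-1$ and multiplying by $p/2$ (which upgrades the congruence to one modulo $p^5$) gives
$$2H_{p-1}+pH_{p-1,2}\equiv-p^2H_{p-1,3}-p^3H_{p-1,4}-p^4H_{p-1,5}\pmod{p^5}.$$

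Finally I would invoke Lemma~\ref{l2.2}: for $p\ge 7$ one has $H_{p-1,3}\equiv 0\pmod{p^2}$ and $H_{p-1,4}\equiv 0\pmod{p}$, so $p^2H_{p-1,3}\equiv p^3H_{p-1,4}\equiv 0\pmod{p^4}$, while $p^4H_{p-1,5}\equiv 0\pmod{p^4}$ trivially since $H_{p-1,5}$ is a $p$-adic integer. Therefore the right-hand side vanishes modulo $p^4$, which yields $2H_{p-1}+pH_{p-1,2}\equiv 0\pmod{p^4}$, i.e. the congruence (\ref{con50}). There is no real obstacle beyond careful bookkeeping of the powers of $p$; the one point worth noting is that the term $p^2H_{p-1,3}$ is exactly what forces the use of the sharper statement $H_{p-1,3}\equiv 0\pmod{p^2}$ (the odd-order case $m=3$ of Lemma~\ref{l2.2}) rather than merely $H_{p-1,3}\equiv 0\pmod{p}$.
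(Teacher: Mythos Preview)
Your proof is correct and follows essentially the same route as the paper: pair $k$ with $p-k$, expand $1/(p-k)$ as a finite geometric series in $p/k$ to the required precision, sum over $k$, and then kill the higher-order terms using $H_{p-1,3}\equiv 0\pmod{p^2}$ and $H_{p-1,4}\equiv 0\pmod p$ from Lemma~\ref{l2.2}. The only cosmetic difference is that the paper works directly with $\frac{1}{k}+\frac{1}{p-k}$ modulo $p^4$, whereas you pass through $\frac{1}{k(p-k)}$ and then multiply by $p$, which introduces a harmless extra term $p^4H_{p-1,5}$ that vanishes modulo $p^4$ anyway.
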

  \begin{proof}
Multiplying   the identity
  $$
1+\frac{p}{k}+\frac{p^2}{k^2}=\frac{p^3-k^3}{k^2(p-k)}
 $$
by $-p/k^2$ ($1\le k\le p-1$), we obtain
  $$   
-\frac{p}{k^2}\left(1+\frac{p}{k}+\frac{p^2}{k^2}\right)
=\frac{-p^4+pk^3}{k^4(p-k)}\equiv \frac{p}{k(p-k)}\pmod{p^4},
     $$   
which can be written as
 $$
\frac{1}{k}+\frac{1}{p-k}\equiv
-\left(\frac{p}{k^2}+\frac{p^2}{k^3}+\frac{p^3}{k^4}\right)
\pmod{p^4}.
 $$
After summation of the above congruence over 
$k=1,\ldots ,p-1$ we immediately obtain
  \begin{equation}\label{con51}
2H_{p-1}\equiv -pH_{p-1,2}-p^2H_{p-1,3}-p^3H_{p-1,4}\pmod{p^4}.
  \end{equation}
Taking the congruences $H_{p-1,3}\equiv 0\,(\bmod{\,p^2})$ 
and $H_{p-1,4}\equiv 0\,(\bmod{\,p})$ of Lemma~\ref{l2.2} into 
(\ref{con51}), it becomes (\ref{con50}).
  \end{proof}

\begin{proof}[Proof of Theorem~\ref{t1.1}]
The first congruence of (\ref{con3}) is in fact the congruence 
(\ref{con15}) of Lemma~\ref{l2.3}.

The identity (\ref{con27}) of Lemma~\ref{l2.5}
with $m=2$ and $n=p-1$ becomes
  \begin{equation}\label{con52}
\sum_{1\le i\le k\le p-1}(-1)^{k-1}{p-1\choose k}
\frac{1}{ik}=\sum_{k=1}^{p-1}\frac{1}{k^2}=H_{p-1,2}.
   \end{equation}
Substituting the congruence (\ref{con9}) of Lemma~\ref{l2.1}
into the left hand side of equality (\ref{con52}), we find that
   \begin{equation}\label{con53}\begin{split}
& \sum_{1\le i\le k\le p-1}\frac{(-1)^{k-1}}{ik}{p-1\choose k}
\equiv  -\sum_{1\le i\le k\le p-1}\frac{1}{ik}+
p\sum_{1\le i\le k\le p-1}\frac{H_k}{ik}\\
&-\frac{p^2}{2}\sum_{1\le i\le k\le p-1}\frac{H_k^2}{ik}
+\frac{p^2}{2}\sum_{1\le i\le k\le p-1}\frac{H_{k,2}}{ik}\pmod{p^3}.
  \end{split}\end{equation}
By Wolstenholme's theorem,
    \begin{equation}\label{con54}
\sum_{1\le i\le k\le p-1}\frac{1}{ik}=
\frac{1}{2}\left(H_{p-1}^2+H_{p-1,2}\right)\equiv 
\frac{1}{2}H_{p-1,2}\pmod{p^2}.
   \end{equation}
Next we have    
     \begin{equation}\label{con55}
\sum_{1\le i\le k\le p-1}\frac{H_k}{ik}=\sum_{k=1}^{p-1}\frac{H_k}{k}
\sum_{i=1}^{k}\frac{1}{i}=\sum_{k=1}^{p-1}\frac{H_k^2}{k}.
     \end{equation}
Further, using (\ref{con46}) of Lemma~\ref{l2.9},
\begin{equation}\label{con56}
\sum_{1\le i\le k\le p-1}\frac{H_k^2}{ik}=\sum_{k=1}^{p-1}\frac{H_k^2}{k}
\sum_{i=1}^{k}\frac{1}{i}=\sum_{k=1}^{p-1}\frac{H_k^3}{k}\equiv 0\pmod{p}.
     \end{equation}
Similarly, using (\ref{con45}) of Lemma~\ref{l2.9},
   \begin{equation}\label{con57}
\sum_{1\le i\le k\le p-1}\frac{H_{k,2}}{ik}=\sum_{k=1}^{p-1}\frac{H_{k,2}}{k}
\sum_{i=1}^{k}\frac{1}{i}=\sum_{k=1}^{p-1}\frac{H_{k,2}\cdot H_k}{k}
\equiv 0\pmod{p}.
    \end{equation}
Now inserting  (\ref{con54})--(\ref{con57}) into 
(\ref{con53}), we obtain
\begin{equation}\label{con58}
  \sum_{1\le i\le k\le p-1}\frac{(-1)^{k-1}}{ik}{p-1\choose k}
\equiv -\frac{1}{2}H_{p-1,2}+p\sum_{k=1}^{p-1}\frac{H_k^2}{k}.
   \end{equation}
The equality (\ref{con52}) and the congruence (\ref{con58}) give 
 \begin{equation}\label{con59}
 \sum_{k=1}^{p-1}\frac{H_k^2}{k}
\equiv\frac{3}{2p}\sum_{k=1}^{p-1}\frac{1}{k^2}\pmod{p^2}.
   \end{equation}
The congruences (\ref{con59}) and (\ref{con50}) of Lemma~\ref{l2.10}
yield 
\begin{equation}\label{con60}
 \sum_{k=1}^{p-1}\frac{H_k^2}{k}
\equiv -\frac{3}{p^2}\sum_{k=1}^{p-1}\frac{1}{k}\pmod{p^2}.
   \end{equation}
Finally, the  congruences (\ref{con59}) and (\ref{con60})
complete  proof of Theorem~\ref{t1.1}. 
 \end{proof}

\noindent{\bf Remark.}  From the identity
     $$
H_{k-1}^3=\left(H_k-\frac{1}{k}\right)^3=H_k^3-3\frac{H_k^2}{k}+
3\frac{H_k}{k^2}-\frac{1}{k^3}
   $$
immediately follows that 
 $$
\sum_{k=1}^{p-1}\frac{H_k^2}{k}-\sum_{k=1}^{p-1}\frac{H_k}{k^2}
=\frac{1}{3}\left(H_{p-1}^3-H_{p-1,3}\right).
   $$
Inserting in the right hand  side of the above identity 
the congruences $H_{p-1}\equiv 0\,(\bmod{\,p^2})$ and 
$H_{p-1,3}\equiv -\frac{6p^2B_{p-5}}{5}\,(\bmod{\,p^3})$
from \cite[Theorem 5.1(a) with  $k=3$]{s1}, we find that
for a prime $p\ge 7$,  
  $$
\sum_{k=1}^{p-1}\frac{H_k^2}{k}-\sum_{k=1}^{p-1}\frac{H_k}{k^2}
\equiv \frac{2p^2B_{p-5}}{5}\pmod{p^3}.
  $$
However,  the determination of 
$\sum_{k=1}^{p-1}\frac{H_k^2}{k}\,(\bmod{\,p^3})$ 
it seems  to be a difficult problem.

\end{document}